\newtheorem{theo}{Theorem}
\newtheorem{prop}{Proposition}
\newtheorem{lem}{Lemma}
\theoremstyle{definition}
\newtheorem{defn}{Definition}
\theoremstyle{remark}
\newtheorem{rem}{Remark}
\numberwithin{equation}{section}
\newcommand{\cal}       {\mathcal}
\def\finpr{\hfill \hbox{
\vrule height 1.453ex  width 0.093ex  depth 0ex \vrule height
1.5ex  width 1.3ex  depth -1.407ex\kern-0.1ex \vrule height
1.453ex  width 0.093ex  depth 0ex\kern-1.35ex \vrule height
0.093ex  width 1.3ex  depth 0ex}}
\def\bC{{\Bbb C}}
\def\bR{{\Bbb R}}
                   \def\l{\lambda}
       \def\ds{\displaystyle}  \let\w=\wedge
                  \let\l=\rightarrow
                 \def\ds{\displaystyle}
\let\ov=\overline
\def\1{\hbox{\rm1\kern-2.2pt{l}}\,}
\title[$J$-analytic subsets and Lelong numbers on almost complex manifolds]{Poincar\'e-Lelong formula, $J$-analytic subsets and Lelong numbers of currents on almost complex manifolds}
\author[F. Elkhadhra]{Fredj Elkhadhra}
\address{Département de Mathématique\\ Faculté des sciences de Monastir\\ 5000 Monastir Tunisie.}
\email{fredj.elkhadhra@fsm.rnu.tn}
\subjclass[2000]{Primary 32C30; Secondary 32Q60}
\keywords{Positive current, Almost complex manifold}
\begin{document}

\begin{abstract}
    In this paper, we first establish a Poincar\'e-Lelong type formula in the almost complex setting. Then, after introducing the notion of $J$-analytic subsets, we study the restriction of a closed positive current defined in an almost complex manifold $(M,J)$ on a $J$-analytic subset. Finally, we prove that the Lelong numbers of a plurisubharmonic current defined on an almost complex manifold are independent of the coordinate systems.
\end{abstract}
\maketitle
\section{Preliminaries and notations}
    Let $M$ be a real-dimentional manifold equipped with a smooth section $J\in{\rm End}(TM)$ for which $J^2=-\1$; we call $J$ the {\it almost complex structure} on $M$, and $(M,J)$ an {\it almost complex manifold}. Denote by $TM$ the tangent bundle to $M$ and by $TM^\star$ the associated dual bundle. Then, by extending $J$ to a $\bC$-linear automorphism of the complexification $T_\bC M$, it induces the splitting $$T_\bC M=T^{1,0}M\oplus T^{0,1}M ,$$ where $T^{1,0}M$ and $T^{0,1}M$ are the complex subbundles of $T_\bC M$ associated respectively with the $(+i)$ and $(-i)$-eigenspaces of $J$. Also we have $T_\bC ^*M={T^*_{1,0}}M\oplus {T^*_{0,1}}M$, where ${T^*_{1,0}}M$ and ${T^*_{0,1}}M$ are the dual bundles of $T^{0,1}M$ and $T^{0,1}M$ respectively. We also consider ${\mathscr C }_{p,q}^\infty(M,\bC)={\mathscr C }^\infty(\bigwedge^{p,q}T_\bC^*M,\bC)$ (resp.\ ${\mathscr D }_{p,q}(M,\bC)$) as the space of complex differentials (resp.\ differentials with compact support) of bidegree $(p,q)$ on $M$. The dual ${\mathscr D }'_{p,q}(M,\bC) $ is the space of currents of bidimension $(p,q)$ or of bidegree $(n-p,n-q)$. Recall that a current $T\in {\mathscr D }'_{p,q}(M,\bC)$ is nothing but a differential form of bidegree $(p,q)$ with distribution coefficients. Let $d$ be the De Rham exerior derivative, which splits as $d=\partial_J +\overline \partial_J -\theta_J  -\ov\theta_J\,$, where the operators $\partial_J $, $\overline \partial_J $, $\theta_J$ and $\ov\theta_J$ are of type respectively $(1,0)$, $ (0,1)$, $(2,-1)$ and $(-1,2)$. As is well-known the structure $J$ need not be integrable, i.e.\ $J$ need not be induced from local complex coordinates. According to the celebrated Newlander-Nirenberg theorem, this is the case if and only if the torsion tensor $\theta_J$ vanishes, or equivalently, if and only if $\overline\partial_J^2=0$. The following definition will be useful.
    \begin{defn}\
        \begin{enumerate}
            \item A form $\varphi\in{\mathscr C }_{p,p}^\infty(M,\bC)$ is said to be  positive if for all vectors $\xi_1,\cdots,\xi_p\in TM$ one has   $\varphi(\xi_1,J\xi_1,\cdots,\xi_p,J\xi_p)\geq 0$.  We say that $\varphi$ is   strongly positive if it can be written as $\varphi=\sum_{j=1}^N \lambda_ji\alpha_{1,j}\wedge\overline{\alpha}_{1,j} \wedge \cdots\wedge   i\alpha_{p,j}\wedge\overline{\alpha}_{p,j},$ where $\lambda_j\geq 0$ and $\alpha_{k,j}\in{\mathscr C }_{1,0}(M,\bC)$, for $j=1,\cdots,N$.
            \item Let $T$ be a current of bidimension $(p,p)$ on $(M,J)$. We say  that $T$ is positive if $T\w\varphi$ is a positive Radon   measure for every form $\varphi$ of bidegree $(p,p)$ on $M$ which is   strongly positive. The current $T$ is said to be closed if $dT=0$, and   plurisubharmonic if the current   $i\partial_J\overline\partial_JT$ is positive.
            \item Let $u$ be an upper semi-continuous function on $M$. One says that $u$ is $J$-plurisubharmonic ($J$-psh for short) if $u\circ \gamma$ is a subharmonic function for every $J$-holomorphic curve.
        \end{enumerate}
    \end{defn}
    Regarding positive currents, it is known that there are two important kinds of examples of positive currents \cite{7}. The first kind consists of currents of integration over almost complex submanifolds; they are closed currents. Recall that a real smooth submanifold $N$ of $M$ is said to be almost complex submanifold if the tangent bundle $TN$ is invariant by $J$. The second kind consists of currents of the form $i{\partial_J\overline\partial_J u}$, where $u$ is a $J$-psh function on $M$ such that $u\not\equiv -\infty\,$; they are not necessarily closed when $J$ is non integrable. Assume that $\omega\in{\mathscr C }_{1,1}^\infty(M,\bC)$ is a hermitian metric on $(M,J)$ and let $(\xi_1,\cdots,\xi_n )$ be an $\omega$-orthonormal local basis for the bundle $T^{1,0}M$. Hence, with respect to the above basis we have $\omega=(i/2)\sum_{j=1}^n\xi^*_j\wedge \overline{\xi}_j^*$. Let $\Omega$ be an open subset of $M$ and $A$ be a closed subset of~$\Omega$. A current $T$ of bidimension $(p,q)$ with zero order on $\Omega\smallsetminus A$ can be written as a differential form with measure coefficients, i.e.\ $T=\sum_{|I|=p,|J|=q}T_{IJ}\xi^*_I\wedge \overline{\xi}_J^*$ where $T_{IJ}$ are Radon measures on $\Omega\smallsetminus A$. We say that $\widetilde{T}$ exists if for all $I,J$ the measure $T_{IJ}$ is locally finite near the points of $A$. In this case $\widetilde T$ is the trivial extension of $T$ (i.e.\ $\widetilde T=0$ on $A$). When $p=q$ and $T$ is positive, the measure $T\w\omega^p$ is the mass measure of $T$ (see \cite{7} for more details). Consequently, $\widetilde{T}$ exists if and only if the mass measure $T\w\omega^p$ is locally finite in a neighborhood of any point of~$A$. The structure of the paper is as follows. In section 2, we give the analogue of the Poincar\'e-Lelong formula in the almost complex setting. In section 3, we introduce the notion of $J$-analytic subsets and we consider the restriction of a closed positive current on such a subset. In section 4, we complete the work begun by \cite{3} concerning the existence of the Lelong numbers of a plurisubharmonic current and we prove their independence from the almost complex coordinates.

\section{Poincar\'e-Lelong formula}
    Let $f$ be a holomorphic function on an open set $\Omega$ of $\bC$ and let us denote by $[Z]=\sum m_j[Z_j]$ the current of integration on the zero divisor of $f$, (here $Z_j$ are the irreducible components of $Z$ and $m_j$ is the multiplicity of $f$ on $Z_j$). The classical Poincar\'e-Lelong formula states that ${i\over \pi}\partial\ov\partial\log |f|=[Z]$ in the sense of currents. Roughly speaking, if $\Delta$ is the Laplacien operator, then the trace ${1\over {2\pi}}\Delta\log |f|$ is the euclidean area measure of $Z$. We refer the reader to \cite{1} and \cite{6} for a more general formula involving holomorphic sections of a hermitian bundle. Our aim in this section is to establish an analogue of the Poincar\'e-Lelong-King formula in the almost complex category. Let $\Omega$ be an open set of $\bR^{2n}$ equipped with an almost complex structure $J$ of class~${\mathscr C }^3$. Given a ${\mathscr C }^3$-submanifold $Z$ of $\Omega$ of codimension $2p$ provided $J(TZ)=TZ$. Let $U$ be an open subset of $\Omega$ such that $Z$ is defined on $U$ by $f_1=\cdots=f_p=0$, where $f_j$ are of class ${\mathscr C }^3$ on $U$, $\ov\partial_Jf_j=0$ on $Z\cap U$ and $\partial_Jf_1\wedge\cdots\wedge\partial_J f_p\not=0$ on $U$ (see \cite[prop.1]{2}). With these notations, our generalization of the Poincar\'e-Lelong-King formula can be stated as follows :
    \begin{theo}\label{th1}
        Denote $f=(f_j)_{1\le j \le p}$, $|f|^2=\ds\sum_{j=1}^p|f_j|^2$. Then we have :
        $$\left({i\over \pi}\partial_J\ov\partial_J\log |f|^2\right)^p=[Z]+ R_J(f),$$
        where $R_J(f)$ is a $(p,p)$-current which has $L_{loc}^{\alpha}$ integrable as coefficients, where $\alpha<1+{1\over{2p-1}}$. Moreover, $R_J(f)=0$ when the structure $J$ is integrable.
    \end{theo}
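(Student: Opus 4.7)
The plan is to imitate King's classical derivation of the Poincar\'e--Lelong formula, carefully isolating the corrections that arise from the non-integrability of $J$ (via $\bar\partial_J f_j$, which only vanishes on $Z$) and from the torsion $\theta_J$. Each correction term should gain an extra factor of $|f|$ compared with the principal term, which converts the $|f|^{-2p}$ singularity of the $p$-fold power into a $|f|^{-(2p-1)}$ singularity and yields the stated $L^\alpha_{loc}$ bound.

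I would first check that $\log|f|^2\in L^1_{loc}(U)$ so that the left-hand side defines a current: combining $\partial_J f_1\wedge\cdots\wedge\partial_J f_p\neq 0$ with $\bar\partial_J f_j|_Z = 0$ shows that the real differentials $df_1,\ldots,df_p$ are linearly independent along $Z$, so $|f|$ is comparable to $d(\cdot,Z)$ near $Z$. I would then regularize via $\chi_\varepsilon:=\log(|f|^2+\varepsilon)$ and expand
\[
\partial_J\chi_\varepsilon \;=\; \frac{\sum_j \bar f_j\,\partial_J f_j}{|f|^2+\varepsilon}\;+\;\frac{\sum_j f_j\,\partial_J\bar f_j}{|f|^2+\varepsilon},
\]
splitting into a \emph{principal} part (containing only $\partial_J f_j$ and its conjugate) and a \emph{torsion} part (containing $\partial_J\bar f_j = \overline{\bar\partial_J f_j}$); applying $\partial_J$ to $\bar\partial_J\chi_\varepsilon$ further introduces $\theta_J$-contributions through the identity $\bar\partial_J^2 f_j = \{\partial_J,\bar\theta_J\}f_j$ forced by $d^2=0$. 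The $p$-fold wedge power $\bigl((i/\pi)\partial_J\bar\partial_J\chi_\varepsilon\bigr)^p$ then expands multilinearly into a sum of terms that are either fully principal or contain at least one torsion factor among $\bar\partial_J f_j$, $\partial_J\bar f_j$, $\theta_J$, $\bar\theta_J$.

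The fully principal contribution matches pointwise off $Z$ the classical King expression in the $(1,0)$-forms $\partial_J f_j$ and their conjugates, so the standard King--Lelong argument --- either by a Stokes' theorem computation on shrinking tubular neighborhoods of $Z$, or equivalently by a Fubini push-forward using $J$-adapted coordinates along $Z$ from \cite[Prop.~1]{2} --- yields weak convergence to $[Z]$ as $\varepsilon\to 0$. The remaining mixed terms define $R_J(f)$. For their regularity, a Taylor estimate gives $|\bar\partial_J f_j(x)|\le C\,d(x,Z)\le C'|f(x)|$ near $Z$ (since $\bar\partial_J f_j\in\mathcal{C}^2$ vanishes on $Z$), and combined with the $|f|^{-2p}$ denominator this bounds each summand of $R_J(f)$ pointwise by $C|f|^{-(2p-1)}$. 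The coarea formula in the $2p$ normal directions to $Z$ then gives $\int_K|f|^{-(2p-1)\alpha}\,dV<\infty$ exactly when $\alpha<1+1/(2p-1)$. When $J$ is integrable, Newlander--Nirenberg produces holomorphic coordinates near $Z$ in which one can replace $f$ by a genuinely holomorphic defining map and reduce to the classical Poincar\'e--Lelong--King formula, forcing $R_J(f)\equiv 0$.

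The main obstacle will be the convergence step: showing precisely that the principal part of $\bigl((i/\pi)\partial_J\bar\partial_J\chi_\varepsilon\bigr)^p$ converges to $[Z]$ without leakage into $R_J(f)$, and that the mixed terms converge in $L^\alpha_{loc}$ rather than merely distributionally. Both require a careful choice of coordinates along $Z$ in which the $f_j$ play the role of almost-holomorphic normal coordinates to sufficiently high order, together with sharp control of the error integrals as $\varepsilon\to 0$.
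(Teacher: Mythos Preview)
Your plan is essentially the paper's proof: regularise via $\log(|f|^2+\varepsilon^2)$, expand $(i\partial_J\bar\partial_J\log(|f|^2+\varepsilon^2))^p$, isolate the principal piece built only from $\partial_J f_j$ and $\overline{\partial_J f_j}$, and control the remainder through $\bar\partial_J f_j=O(|f|)$. The paper does the algebra explicitly to reach the kernel $p!\,\varepsilon^2\bigl(\prod_j i\partial_J f_j\wedge\overline{\partial_J f_j}\bigr)/(|f|^2+\varepsilon^2)^{p+1}$ and then rescales $f_j=\varepsilon w_j$ to obtain $[Z]$; this is exactly your ``Fubini push-forward in $J$-adapted coordinates.''

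Two small remarks. First, your detour through $\theta_J$ and $\bar\partial_J^2 f_j$ is superfluous: the expansion of $\partial_J\bar\partial_J|f|^2$ uses only the Leibniz rule, and every error term in the paper arises from the single estimate $\bar\partial_J f_j=\sum_k(f_k\alpha_k^j+\bar f_k\beta_k^j)=O(|f|)$; neither the torsion operators nor $\bar\partial_J^2$ ever enter. Second, for the integrable case the paper just asserts $\bar\partial f_j\equiv 0$ (i.e.\ takes the $f_j$ holomorphic), which kills the error terms in place. Your argument instead replaces $f$ by a holomorphic defining map $g$ via Newlander--Nirenberg; that yields $R_{J}(g)=0$, not $R_{J}(f)=0$ for the original $f$, so as written it does not quite close --- you would still need to show the conclusion is independent of the choice of defining functions, or simply assume (as the paper tacitly does) that in the integrable case the $f_j$ are holomorphic.
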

Observe that the last formula gives the expression of the current of integration associated to $Z$ in terms of $f=(f_1,\cdots,f_p)$. Moreover, Theorem \ref{th1} guarantees that the Monge-Amp\`ere current $\left({i\over \pi}\partial_J\ov\partial_J\log |f|^2\right)^p$ is well defined on $U$ and the extra current $R_J(f)=\left({i\over \pi}\partial_J\ov\partial_J\log |f|^2\right)^p_{|U\smallsetminus Z}$ appears as the obstruction of the closedness  and the positivity of the above Monge-Amp\`ere current.
\begin{proof}
For $\varepsilon>0$, we have
\begin{equation}\label{eq2.1}
    \left(i\partial_J\ov\partial_J\log(|f|^2+\varepsilon^2)\right)^p=\frac{(i\partial_J\overline\partial_J|f|^2)^p}{(|f|^2+\varepsilon^2)^p} -p\frac{(i\partial_J\overline\partial_J|f|^2)^{p-1}\wedge i\partial_J|f|^2\wedge\overline\partial_J|f|^2}{(|f|^2+\varepsilon^2)^{p+1}}=w_1(f,\varepsilon)-w_2(f,\varepsilon).
\end{equation}
Our starting point in the study of $w_1(f,\varepsilon)$ is the following direct computation :
$$
\begin{array}{lcl}
\left(i\partial_J\overline\partial_J|f|^2\right)^p&=&\left(i\ds\sum_{j=1}^p \partial_J\overline\partial_J|f_j|^2\right)^p\\&=&\left[i\ds\sum_{j=1}^p \partial_J f_j\wedge\overline{\partial_J f_j}+i\ds\sum_{j=1}^p f_j\partial_J \overline{\partial_J f_j}+i\ds\sum_{j=1}^p \partial_J \overline f_j\wedge\overline\partial_J f_j+i\ds\sum_{j=1}^p\overline f_j\partial_J \overline\partial_J f_j\right]^p\\&=&\left[i\ds\sum_{j=1}^p \partial_J f_j\wedge\overline{\partial_J f_j}+i\ds\sum_{j=1}^p \partial_J \overline f_j\wedge\overline\partial_J f_j+2{\cal R}e\left(i\ds\sum_{j=1}^p \overline f_j\partial_J \overline\partial_J f_j\right)\right]^p.
\end{array}
$$
Since $\ov\partial_Jf_j=0$ on $Z$ for $j=1,\cdots,p$, it follows that there exist $(0,1)$-forms $\alpha_k^j,\beta_k^j$ of class ${\cal C}^1$ such that $\ov\partial_Jf_j=\ds\sum_{k=1}^p (f_k\alpha_k^j+\ov f_k\beta_k^j)$. Therefore, we get the following estimates :
    $$
    \begin{array}{lcl}
        i\ds\sum_{j=1}^p \partial_J \overline f_j\wedge\overline\partial_J f_j&=&\ds\sum_{j=1}^p i\left(\ds\sum_{k=1}^p \ov f_k\ov\alpha_k^j+f_k\ov\beta_k^j\right)\wedge\left(\ds\sum_{s=1}^p f_s\alpha_s^j+\ov f_s\beta_s^j\right)\\
        &=&\ds\sum_{1\leq j,k,s\leq p}\left(\ov f_kf_s \overline \alpha_k^j\wedge\alpha_k^s+\overline f_k\ov f_s\ov\alpha_k^j\wedge\beta_s^j+f_k f_s\overline\beta_k^j\wedge\alpha_s^j+f_k \ov f_k\overline\beta_k^j\wedge\beta_s^j\right)\\
\noalign{\vskip4pt}
        &=&{\cal O}(|f|^2) ,
\end{array}$$
where ${\cal O}\left(|f|^2\right)$ is a current which has ${\cal O}\left(|f|^2\right)$ as coefficients. Also, we have
$$
\begin{array}{lcl}
i\ds\sum_{j=1}^p\overline f_j\partial_J \overline\partial_J f_j &=& \ds\sum_{1\leq j,k\leq p} \left(i\overline f_j\partial_J f_k\wedge\alpha_k^j+\ov f_jf_k\partial_J\alpha_k^j+ \overline f_j\partial_J\overline f_k\wedge\beta_k^j+\overline f_j\ov f_k\partial_J\beta_k^j\right)\\&=&\ds\sum_{1\leq j,k\leq p} i\left(i\overline f_j\partial_J f_k\wedge\alpha_k^j+\ov f_jf_k\partial_J\alpha_k^j\right)+\ds\sum_{1\leq j,k,s\leq p}i\left(\overline f_j\ov f_s\ov\alpha_s^k\wedge\beta_k^j+\ov f_j f_s\ov\beta_s^k\wedge\beta_k^j\right)\\&=& \ds\sum_{1\leq j,k\leq p} i\overline f_j\partial_J f_k\wedge\alpha_k^j+{\cal O}\left(|f|^2\right)={\cal O}\left(|f|\right).
\end{array}
$$
By adding the last two equations involved in the expression of $w_1(f,\varepsilon)$ we obtain
\begin{equation}\label{eq2.2}
w_1(f,\varepsilon)={{\bigg(i\ds\sum_{j=1}^p \partial_J f_j\wedge\overline{\partial_J f_j}+{\cal O}(|f|)\bigg)^p}\over{(|f|^2+\varepsilon^2)^p}}=
p!{{i\partial_J f_1\wedge\overline{\partial_J f_1}\wedge\cdots\wedge i\partial_J f_p\wedge\overline{\partial_J f_p}}\over{(|f|^2+\varepsilon^2)^p}}+{{{\cal O}\left(|f|\right)}\over{(|f|^2+\varepsilon^2)^p}},
\end{equation}
Let us now compute the second form $w_2(f,\varepsilon)$ of (\ref{eq2.1}). For this, we begin with :

$$
\begin{array}{lcl}
i\partial_J|f|^2\wedge\overline\partial_J|f|^2&=&i\left[\ds\sum_{j=1}^p f_j\partial_J\overline f_j+\overline f_j\partial f_j\right]\wedge\left[\ds\sum_{k=1}^p f_k\overline{\partial_J f_k}+f_k\overline \partial f_k\right]\\&=&2{\cal R}e\left(\ds\sum_{1\leq j,k\leq p} f_j f_k\partial_J \ov f_j\wedge\overline{\partial_J f_k}\right)+\ds\sum_{1\leq j,k,r,s\leq p} if_j f_k\left(\ov f_s\ov\alpha_s^j+f_s\ov\beta_s^j\right)\wedge\left(f_r\alpha_r^k+\ov f_r\beta_r^k\right)\\& &+\ds\sum_{1\leq j,k\leq p} \overline f_j f_k\partial_J f_j\wedge\overline{\partial_J f_k}\\&=&i\ds\sum_{1\leq j,k\leq p} \overline f_j f_k\partial_J f_j\wedge\overline{\partial_J f_k}+{\cal O}\left(|f|^3\right).
\end{array}
$$
It is immediate to get ${\cal O}\left(|f|^3\right)$ because the first term in the second equality of the preceding equation is a ${\cal O}\left(|f|^3\right)$, while the second one has ${\cal O}\left(|f|^4\right)$ as coefficients. By means of the above estimation and turning back to the expression of $w_2(f,\varepsilon)$, we derive that :
$$
\begin{array}{lcl}
(|f|^2+\varepsilon^2)^{p+1}w_2(f,\varepsilon)&=&p\left(i\partial_J\overline\partial_J|f|^2\right)^{p-1}\wedge i\partial_J|f|^2\wedge\overline\partial_J|f|^2\\&=&
p\left[\ds\sum_{j=1}^p i\partial_J f_j\wedge\overline{\partial_J f_j}+{\cal O}\left(|f|\right)\right]^{p-1}\wedge\left[\ds\sum_{1\leq j,k\leq p} \overline f_j f_ki\partial_J f_j\wedge\overline{\partial_J f_k}+{\cal O}\left(|f|^3\right)\right]
\\&=&p\left[\left(\ds\sum_{j=1}^p i\partial_J f_j\wedge\overline{\partial_J f_j}\right)^{p-1}+{\cal O}\left(|f|\right)\right]\wedge\left[i\gamma\wedge\ov\gamma+{\cal O}\left(|f|^3\right)\right],
\end{array}
$$
where $\gamma=\ds\sum_{j=1}^p \overline f_j \partial_Jf_j$. On the other hand, since $$\left(\ds\sum_{j=1}^p i\partial_J f_j\wedge\overline{\partial_J f_j}\right)^{p-1}=(p-1)!\ds\sum_{j=1}^p \left(\ds\prod_{k\not=j}^pi\partial_J f_k\wedge\overline{\partial_J f_k}\right),$$ it follows that
$$
\begin{array}{lcl}
(|f|^2+\varepsilon^2)^{p+1}w_2(f,\varepsilon)&=&p!\ds\sum_{j=1}^p\left( \ds\prod_{k\not=j}^pi\partial_J f_k\wedge\overline{\partial_J f_k}\right)\wedge i\gamma\wedge\ov\gamma+{\cal O}\left(|f|\right)\wedge i\gamma\wedge\ov\gamma+{\cal O}\left(|f|^3\right)\\&=&p!|f|^2i\partial_J f_1\wedge\overline{\partial_J f_1}\wedge\cdots\wedge i\partial_J f_p\wedge\overline{\partial_J f_p}+{\cal O}\left(|f|^3\right).
\end{array}
$$
Consequently,
\begin{equation}\label{eq2.3}
w_2(f,\varepsilon)=p!{{|f|^2i\partial_J f_1\wedge\overline{\partial_J f_1}\wedge\cdots\wedge i\partial_J f_p\wedge\overline{\partial_J f_p}}\over{(|f|^2+\varepsilon^2)^{p+1}}}+{{{\cal O}\left(|f|^3\right)}\over{(|f|^2+\varepsilon^2)^{p+1}}}.
\end{equation}
Finally, regarding (\ref{eq2.1}),(\ref{eq2.2}) and (\ref{eq2.3}), we obtain
\begin{equation}\label{eq2.4}
\left(i\partial_J\ov\partial_J\log(|f|^2+\varepsilon^2)\right)^p=p!{{\varepsilon^2i\partial_J f_1\wedge\overline{\partial_J f_1}\wedge\cdots\wedge i\partial_J f_p\wedge\overline{\partial_J f_p}}\over{(|f|^2+\varepsilon^2)^{p+1}}}+{{{\cal O}\left(|f|^3\right)+\varepsilon^2{\cal O}\left(|f|\right)}\over{(|f|^2+\varepsilon^2)^{p+1}}}.
\end{equation}
Now let $z=(f_1,\cdots,f_p,z_{p+1},\cdots,z_n)$ be a local coordinates and we put $\xi_j^\star=\partial_J f_j$ for $j=1,\cdots,p$. Also, we denote by $\xi_j=\left({\partial\over{\partial z_j}}\right)^{1,0}$ and by $\xi_j^\star$ the associated dual vector, for every $j\in I_0=\{p+1,\cdots,n\}$. It follows that $\left(\xi_j^\star\right)_{1\leq j\leq n}$ is a smooth local frame of the bundle $T_{1,0}^\star\Omega$. With respect to this frame, let $\psi(z)=\sum_{|I|=|J|=n-p}\psi_{IJ}(z)\xi_I^\star\wedge \ov\xi_J^\star$ be a test form, and let us denote $d\lambda=\ds\prod_{j=1}^n{i\over 2}\xi_j^\star\wedge\overline{\xi_j^\star}$. Choose a constant $\epsilon(I_0)$ such that $$\epsilon(I_0) i\partial_J f_1\wedge\overline{\partial_J f_1}\wedge\cdots\wedge i\partial_J f_p\wedge\overline{\partial_J f_p}\wedge \xi_{I_0}^\star\wedge\overline{\xi_{I_0}^\star}=d\lambda.$$
Then  using (\ref{eq2.4}), we see that
$$
\left\langle \left(i\partial_J\ov\partial_J\log(|f|^2+\varepsilon^2)\right)^p,\psi\right\rangle ={{p!}\over{\epsilon(I_0)}}\ds\int{{\varepsilon^2 \psi_{I_0I_0}(z)}\over{(|f|^2+\varepsilon^2)^{p+1}}}d\lambda+\ds\int{{\left[{\cal O}\left(|f|^3\right)+\varepsilon^2{\cal O}\left(|f|\right)\right]\wedge\psi}\over{(|f|^2+\varepsilon^2)^{p+1}}}.$$
If we put $f_j=\varepsilon w_j$ in the first integral, we get
$$\begin{array}{lcl}
\ds\left\langle \left(i\partial_J\ov\partial_J\log(|f|^2+\varepsilon^2)\right)^p,\psi\right\rangle &=&\ds {{p!}\over{\epsilon(I_0)}}\ds\int{{\psi_{I_0I_0}(\varepsilon w_1,\cdots,\varepsilon w_p,z_{p+1},\cdots,z_n)}\over{(|w|^2+1)^{p+1}}}\widetilde{d\lambda}\\
&&\hfill+\ds\int{{\left[{\cal O}\left(|f|^3\right)+\varepsilon^2{\cal O}\left(|f|\right)\right]\wedge\psi}\over{(|f|^2+\varepsilon^2)^{p+1}}},
  \end{array}
$$
where $\widetilde{d\lambda}$ is a modification of $d\lambda$ by replacing $\partial_J f_j$ by $\varepsilon\partial_J w_j$ for $j=1,\cdots,p$. Letting $\varepsilon\l 0$ and using the fact that $$\int_{\bC^p}{dw\over{(|w|^2+1)^{p+1}}}={{\pi^p}\over{p!}},$$ it is not hard to see that
$$
\begin{array}{lcl}
\ds\left\langle \left({i\over\pi}\partial_J\ov\partial_J\log |f|^2\right)^p,\psi\right\rangle&=&\ds\int\psi_{I_0I_0}(0,\cdots,0,z_{p+1},\cdots,z_n)d\lambda+\ds\int{{\left[{\cal O}\left(|f|^3\right)\right]\wedge\psi}\over{|f|^{2p+2}}}\\&=&\ds\int [Z]\wedge\psi+\ds\int{{\left[{\cal O}\left(|f|^3\right)\right]\wedge\psi}\over{|f|^{2p+2}}}\\&=&\left\langle [Z],\psi\right\rangle+\left\langle R_J(f),\psi\right\rangle,
\end{array}
$$
where $[Z]$ is the current of integration on $Z$ and $R_J(f)$ is a current with coefficients ${\cal O}\left(|f|^{1-2p}\right)$. Before finishing the proof we point out that if $J$ is integrable, then $\ov\partial f_j\equiv 0$, for $j=1,\cdots,p$. Hence, the term ${\cal O}\left(|f|\right)$ in (\ref{eq2.2}) and ${\cal O}\left(|f|^3\right)$ in (\ref{eq2.3}) turn out to be zero. This implies in particular that the extra current $R_J(f)$ vanishes.
 \end{proof}
\section{Restriction of closed positive currents on J-analytic subsets}
\subsection{J-analytic subsets}
In this subsection we are going to introduce the notion of a {\it $J$-analytic} subset in an almost complex manifold $(M,J)$. Such subsets should be considered as almost complex analogues of ``classical'' analytic subsets. According to \cite{5}, a subset $A$ of $M$ is said to be {\it $J$-analytic} in $(M,J)$ if $A$ admits a stratification $A=A_s\cup A_{s-1}\cup \cdots\cup A_0$, where $A_0$ is an almost complex submanifold in $M$ and for $j=1,\cdots,s$, $A_j$ is a closed almost complex submanifold of $M\smallsetminus\ds\cup_{k=0}^{j-1}A_k$. It is important to point out that this definition is far from being ideal as shown by the following example :
$$A=\left\{(z,w)\in\bC^2;\ w\not=0,\ z=e^{1/w}\right\}\cup \left\{w=0\right\}.$$
It is not hard to see that $A$ is not analytic (in the complex sense) but it is $J_{st}$-analytic in the sense of \cite{5} where $J_{st}$ is the standard complex structure on $\bC^2$. This example leads us to formulate another more appropriate definition of a {\it $J$-analytic} subset that has at least the minimum requirement of being compatible with the concept of {\it $J_{st}$-analytic} subset in the integrable situation. For this aim, we introduce the following definition :
 \begin{defn} \label{def2}
    We say that $A$ is a {\it $J$-analytic subset} of $(M,J)$ of dimension $p$ if there exist a finite sequence of closed subsets
 $$\emptyset=A_{-1}\subset A_0\subset A_1\subset\cdots\subset A_p=A ,$$
where $A_j\smallsetminus A_{j-1}$ is a smooth almost complex submanifold in $M\smallsetminus A_{j-1}$, of complex dimension $j$ and has a locally finite $2j$-Hausdorff dimension in the neighborhood of every point of $M$. We say that $A$ is of {\it pure} complex dimension $p$ if moreover we have $A_{j-1}\subset\overline{A_j\smallsetminus A_{j-1}}$, for $j=0,...,p$. If the $p$-dimensional strata $A_p\smallsetminus A_{p-1}$ is connected we say that $A$ is irreducible.
\end{defn}
It is clear that $A_0$ is a smooth almost complex submanifold in $M$. Moreover, it should be mentioned that the inclusions in the above definition are not necessarily strict (the $j$-dimensional strata $A_j\smallsetminus A_{j-1}$ may be empty).
\begin{rem}\
    \begin{enumerate}
        \item  Notice that the previous definition for the almost complex setting does coincide with usual analytic subsets in the integrable case (this follows from the standard extension theorem for analytic sets with locally bounded area). Notice that our above example was constructed precisely in such a way that the area of $z=e^{1/w}$ is not locally finite near $w=0$.
        \item In order to justify the above definition let us recall that every closed $J$-complex curve $A$ of $(M,J)$ is $J$-analytic. Indeed, we write $\emptyset=A_{-1}\subset A_0\subset A_1=A ,$ where $A_0$ is the singular part of $A$ which is discrete. More generally, every almost complex submanifold is a $J$-analytic subset.
        \item According to the terminology introduced in \cite{2}, a regular complete $J$-pluripolar (r.c.p for short) subset $A$ of $(M,J)$ is the $-\infty$ locus of a $J$-psh function, which is of class ${\mathscr C}^2$ away from $A$. In the same paper it was proved that every almost complex submanifold of $(M,J)$ is locally r.c.p. According to definition \ref{def2}, this enables us to deduce without difficulty that every $J$-analytic subset $A$ is a locally regular complete $J$-pluripolar subset away from the singular part of $A$. Obviously, a natural question arises here : does every $J$-analytic subset is a (locally) regular complete pluripolar set? This is a well-known result when the structure $J$ is integrable.
    \end{enumerate}
 \end{rem}
\subsection{Closed positive currents on J-analytic subsets}
Our next result concerns the restriction of closed positive currents on $J$-analytic subsets. First, recall that in terms of currents, if $A$ is a $J$-analytic subset of complex dimension $p$ then $[A]$ defines a $(p,p)$-closed positive current by integrating $(p,p)$-test forms on the components of $A$ of dimension $2p$. More precisely, assume that $\emptyset=A_{-1}\subset A_0\subset A_1\subset\cdots\subset A_p=A$ is a sequence as in definition \ref{def2} and let $Y=A_p\smallsetminus A_{p-1}$. Since $Y$ is a smooth almost complex submanifold in $M\smallsetminus A_{p-1}$, then the integration on $Y$ defines a positive closed current on $M\smallsetminus A_{p-1}$. Moreover, $Y$ has a locally finite $2p$-Hausdorff dimension in the neighborhood of every point of $M$, hence $[Y]$ has locally finite mass across $A_{p-1}$ and therefore $[A]=\widetilde{[Y]}$ is the trivial extension of $[Y]$. L'et us recall the following result :
\begin{lem} Assume that $T$ is a positive closed (resp.plurisubharmonic) current on $(M,J)$ and $A$ is a $J$-analytic subset of complex dimension $p$, then the cut-off ${\1}_A T$ is also a positive and closed (resp.plurisubharmonic) current supported by $A$.
\end{lem}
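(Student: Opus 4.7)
The plan is to argue by induction on the complex dimension $p$ of $A$, using the stratification $\emptyset = A_{-1} \subset A_0 \subset \cdots \subset A_p = A$ supplied by Definition~\ref{def2}. The base case $p=-1$ is trivial. For the inductive step, decompose
\[
\1_A T = \1_{A_{p-1}} T + \1_Y T, \qquad Y := A_p \smallsetminus A_{p-1},
\]
where $Y$ is a smooth almost complex submanifold of complex dimension $p$ in $M \smallsetminus A_{p-1}$. The inductive hypothesis applied to $A_{p-1}$ yields that $\1_{A_{p-1}} T$ is positive closed (resp.\ psh), so the real work is in handling $\1_Y T$. Positivity of $\1_B T$ for any Borel set $B$ is automatic: if $T$ has bidimension $(q,q)$ and $\varphi$ is any strongly positive $(q,q)$ test form, then $T \wedge \varphi$ is a positive Radon measure, hence so is the restriction $\1_B(T \wedge \varphi) = (\1_B T) \wedge \varphi$; support in $A$ is clear from the definition.

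To establish closedness (resp.\ plurisubharmonicity) of $\1_Y T$ on $M \smallsetminus A_{p-1}$, I would run an almost complex analogue of the El~Mir--Skoda cut-off argument. By the third item of the preceding remark, together with Chirka's construction in \cite{2}, the almost complex submanifold $Y$ is locally regular complete $J$-pluripolar in $M \smallsetminus A_{p-1}$, so around each point one has a local $J$-psh function $u$, of class $\mathscr C^2$ off $Y$, with $Y \subset \{u = -\infty\}$. Setting $\chi_k(t) = \max(1+t/k, 0)$, the functions $\chi_k \circ u$ increase pointwise to $1$ on $M \smallsetminus Y$, so $(1 - \chi_k \circ u) T$ converges weakly to $\1_Y T$. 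Computing $d\bigl((1 - \chi_k \circ u) T\bigr)$ (respectively $i \partial_J \overline\partial_J\bigl((1 - \chi_k \circ u) T\bigr)$) and passing to the limit in $k$ requires control of the error terms involving $d(\chi_k \circ u) \wedge T$, via almost complex Chern--Levine--Nirenberg type estimates; these contributions vanish in the weak limit.

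It remains to extend across $A_{p-1}$. Since the mass of $\1_Y T$ is bounded by that of $T \wedge \omega^p$, it is locally finite near $A_{p-1}$, so the trivial extension $\widetilde{\1_Y T}$ exists, coincides with $\1_Y T$ computed on all of $M$, and is closed (resp.\ psh) by a second application of an El~Mir-type argument, this time to the $J$-analytic, hence locally complete $J$-pluripolar, subset $A_{p-1}$. The main obstacle is the almost complex El~Mir step itself: non-integrability of $J$ adds the torsion operators $\theta_J$ and $\overline\theta_J$ to the splitting of $d$, so one must verify that their action against $\chi_k \circ u$ remains negligible in the limit; in the plurisubharmonic case similar vigilance is required, as one cannot rely on $\overline\partial_J^{\,2} = 0$ to kill mixed terms.
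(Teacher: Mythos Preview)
Your inductive skeleton and the decomposition $\1_A T=\1_{A_{p-1}}T+\1_Y T$ match the paper exactly, and your treatment of $\1_Y T$ on $M\smallsetminus A_{p-1}$ is just an unpacking of the result from \cite{2} that the paper cites as a black box (almost complex submanifolds are locally r.c.p.\ and cut-off by an r.c.p.\ set preserves closedness/psh). So far so good.

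The genuine gap is in your extension step across $A_{p-1}$. You write that $A_{p-1}$ is ``$J$-analytic, hence locally complete $J$-pluripolar'', and then invoke an El~Mir-type argument across all of $A_{p-1}$ at once. But the implication ``$J$-analytic $\Rightarrow$ locally complete $J$-pluripolar'' is precisely the open question raised in Remark~1(3) of the paper; what is known from \cite{2} is only that a \emph{smooth almost complex submanifold} is locally r.c.p., and hence that a $J$-analytic set is r.c.p.\ away from its singular locus. Your one-shot El~Mir extension across $A_{p-1}$ therefore rests on an unproved assertion.

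The paper avoids this by never extending across $A_{p-1}$ in a single step. Instead it writes $M\smallsetminus A_{j-1}=(M\smallsetminus A_{j-2})\smallsetminus(A_{j-1}\smallsetminus A_{j-2})$ and observes that each stratum $A_{j-1}\smallsetminus A_{j-2}$ is a smooth almost complex submanifold of $M\smallsetminus A_{j-2}$, so \cite{2} applies directly: the trivial extension of $R=\1_Y T$ across that single stratum stays closed (resp.\ psh), and since $R$ carries no mass on $A_{j-1}$ this trivial extension is $R$ itself. Iterating from $j=p$ down to $j=0$ yields closedness on all of $M$. Replacing your final El~Mir step with this stratum-by-stratum descent fixes the argument.
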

\begin{proof} The problem is local, so we may assume that $M$ is an open subset $\Omega$ of $\bC^{n}$. Lat $\emptyset=A_{-1}\subset A_0\subset A_1\subset\cdots\subset A_p=A$ be a sequence of $(\Omega,J)$ as in definition \ref{def2} and let $T$ be a positive closed (the case when $T$ is positive and plurisubharmonic is similar) of $(\Omega,J)$. Thank's to a result of \cite{2}, every almost complex submanifold of $(\Omega,J)$ is locally r.c.p and the cut-off of $T$ by a r.c.p subset is also positive and closed. This is the case for ${\1}_{A_0} T$ because $A_0$ is a smooth almost complex submanifold in $\Omega$. Hence, by using an induction on the dimension of $A$ and observe that ${\1}_{A_j}T={\1}_{{A_j}\smallsetminus A_{j-1}}T+{\1}_{A_{j-1}}T$, for $j\leq p$, we need only to prove that $R={\1}_{{A_j}\smallsetminus A_{j-1}}T$ is positive and closed in $\Omega$. Since $A_j\smallsetminus A_{j-1}$ is a smooth almost complex submanifold in $\Omega\smallsetminus A_{j-1}$, again by \cite{2} the current $R$ is positive and closed on $\Omega\smallsetminus A_{j-1}$ (notice that $R$ don't carry any mass on $A_{j-1}$). Let's write $\Omega\smallsetminus A_{j-1}=\left(\Omega\smallsetminus A_{j-2}\right)\setminus \left(A_{j-1}\smallsetminus A_{j-2}\right)$. Since $A_{j-1}\smallsetminus A_{j-2}$ is a smooth almost complex submanifold in $\Omega\smallsetminus A_{j-2}$ and $R$ has locally finite mass near $A_{j-1}\smallsetminus A_{j-2}$ then the trivial extension $\widetilde R$ of $R$ is also positive and closed on $\Omega\smallsetminus A_{j-2}$ (see \cite{2}). As the current $R$ is supported by ${A_j}\smallsetminus A_{j-1}$, we can deduce that $R=\widetilde R$. The proof was completed by repeating the above argument and by using the induction hypothesis.
\end{proof}
In the case when $J$ is integrable, this lemma was proved by El Mir in the more general setting when $A$ is a pluripolar subset. Moreover, lemma 1 extends a result due to [2], if $A$ is an almost complex submanifold. Notice also that by the same idea of the proof of lemma 1, we can easily see that the current of integration $[A]$ on a $J$-analytic subset is positive and closed. In the same direction concerning the restriction of currents, assume that $T$ is a positive plurisubharmonic current of bibimension $(p,p)$ supported by a subset $A$ of vanishing $2p$-Hausdorff dimension in an almost complex manifold $(M,J)$. By a line of arguments that goes back to Siu \cite[lemma 3.3]{8} and by using the fact that the Lelong function (that induces the existence of the Lelong number) is ``almost'' increasing (see \cite{3}), it is not hard to prove that ${\1}_{A}T=0$.  When $A$ is $J$-analytic of dimension $p$, we obtain:
    \begin{theo}
        Let $T$ be a closed positive current of bibimension $(p,p)$ on an almost complex manifold $(M,J)$. Let $A$ be a $J$-analytic subset of $(M,J)$ of dimension $p$. Then, we have $${\1}_A T=m_A[A] ,$$
        where $m_A=\ds\inf_{x\in A}\nu_T(x)$ is the generic Lelong number of $T$ along $A$.
    \end{theo}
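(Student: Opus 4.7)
The plan is to reduce $\1_A T$ to a current on the top-dimensional strata $Y := A_p \setminus A_{p-1}$, represent it there as $f[Y]$ for a non-negative measurable function $f$, and identify $f$ with $m_A$ almost everywhere.

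First, by Lemma 1, $\1_A T$ is a closed positive (hence plurisubharmonic) $(p,p)$-current supported on $A$. Since $A_{p-1}$ is, by Definition~\ref{def2}, a finite union of almost complex submanifolds of complex dimension strictly less than $p$, its $2p$-Hausdorff measure vanishes locally. Applying to $\1_{A_{p-1}}T$ the Siu-type vanishing quoted just before the theorem statement (an adaptation of Siu's lemma 3.3 via the almost-increasing Lelong function of \cite{3}) gives $\1_{A_{p-1}}T=0$, hence $\1_A T=\1_Y T$. Working in $J$-adapted local coordinates near a point of $Y$ where $Y=\{z_{p+1}=\cdots=z_n=0\}$, positivity of $\1_Y T$ together with its support in $Y$ forces all of its coefficients to vanish except that of $\xi^\star_{I_0}\wedge\overline{\xi^\star_{I_0}}$ with $I_0=\{p+1,\dots,n\}$; therefore $\1_Y T=f[Y]$ for some Borel function $f\ge 0$ locally integrable on $Y$. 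Decomposing $T=f[Y]+(T-\1_Y T)$ as a sum of two closed positive currents whose second summand does not charge $Y$, a direct comparison of the standard Lelong averages yields $\nu_T(x)=f(x)$ for $\mathcal{H}^{2p}$-a.e.\ $x\in Y$; in particular $f\ge m_A$ almost everywhere.

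The main obstacle is the reverse inequality $f\le m_A$ almost everywhere on $Y$. For this I would adapt Siu's analyticity theorem for Lelong sublevel sets to the almost complex setting by combining the Poincar\'e-Lelong formula of Theorem~\ref{th1} with the almost-increasing property of the Lelong function from \cite{3}: for every $c>m_A$ the set $\{x\in A:\nu_T(x)\ge c\}$ should be contained in a $J$-analytic subset of complex dimension strictly less than $p$, and hence have vanishing $2p$-Hausdorff measure. Granting this, $\nu_T=m_A$ holds $\mathcal{H}^{2p}$-almost everywhere on $Y$, so $\1_Y T=m_A[Y]$; passing to trivial extensions across $A_{p-1}$ (which carries no $\mathcal{H}^{2p}$-mass) finally produces $\1_A T=m_A[A]$.
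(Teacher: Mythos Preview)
Your overall structure is reasonable, but the route you propose for the reverse inequality $f\le m_A$ is a genuine gap. You explicitly invoke an almost complex analogue of Siu's analyticity theorem for the sublevel sets $\{x:\nu_T(x)\ge c\}$; this is precisely problem \textbf{P2} that the paper poses as open at the end of Section~3. Nothing in the paper (nor in \cite{2}, \cite{3}) furnishes that result, and the Poincar\'e--Lelong formula of Theorem~\ref{th1} does not supply the needed Hartogs-type extension that drives Siu's argument in the integrable case. So as written, the proof is incomplete.

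The paper avoids Siu altogether by a direct mass-comparison. The key step is the \emph{Claim}: for every $\varepsilon>0$ one has ${\1}_A T-m_A\bigl(1-(2pc+\varepsilon)r\bigr)[A]\ge 0$ for $r$ small. This is obtained by using the almost-increasing Lelong function from \cite{3} to bound $\sigma_T(B(a,r))$ from below by $m_A\,\sigma_{[A]}(B(a,r))\bigl(1-O(r)\bigr)$ uniformly in $a\in A$, then averaging these ball inequalities against an arbitrary nonnegative continuous $f$ to conclude that the trace measure ${\1}_A\Theta_{r,\varepsilon}\wedge\beta_1^p$ is nonnegative; finally Federer's support theorem together with the local equations $f_{p+1}=\cdots=f_n=0$ of $A$ upgrades positivity of the trace to positivity of the full current. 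Letting $r\to 0$ yields ${\1}_A T\ge m_A[A]$.

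For the other direction the paper does \emph{not} need to know that $\nu_T=f$ a.e.\ on $Y$ (a fact which, as you implicitly use it, itself leans on Siu-type information). Instead, closedness of ${\1}_A T$ and Federer give ${\1}_A T=c[A]$ with $c$ locally constant; then $T\ge {\1}_A T=c[A]$ forces $\nu_T(x)\ge c\,\nu_{[A]}(x)\ge c$ at every $x\in A$, whence $m_A\ge c$. Combined with $c\ge m_A$ from the Claim, this finishes the proof. Note in particular that you already had $d({\1}_A T)=0$ from Lemma~1 but did not exploit it to make $f$ locally constant; doing so, and replacing your Siu step by the elementary mass-comparison Claim, repairs your argument along the paper's lines.
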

\begin{proof} Assume that $\emptyset=A_{-1}\subset A_0\subset A_1\subset\cdots\subset A_p=A$ is a sequence as in definition \ref{def2}. Since positiveness of currents is a local problem, it is enough to work in a neighborhood of each point $a\in A$. Following Demailly, let $(z_1,\cdots,z_n)$ be an almost complex coordinates around $a$ such that $\ov\partial z_j=O(|z|)$. Denote by $\beta=id\overline\partial|z|^2$, $\beta_1={i\over 2}\sum_{j=1}^n\partial z_j\wedge\overline{\partial z_j}$ and $\sigma_T(B(a,r))=\int_{B(a,r)}T\wedge\beta_1^p$. It is not hard to see that $\beta=\beta_1+O(|z|)$ and $\beta^p=\beta_1^p+O(|z|)$, (see \cite{2}).
\vskip0.1cm
{\bf Claim.} {\it For every $\varepsilon>0$ there exist $r_0>0$ and $c>0$ such that
 $$\Theta_{r,\varepsilon}={\1}_A T-m_A\left(1-(2pc+\varepsilon)r\right)[A]\geq 0,\quad\forall 0<r<r_0.$$}%
{\it Proof of the claim.} Consider the following important fact : the map $r\mapsto \sigma_T(B(a,r)){{(1+rc)^{2p}p!}\over{r^{2p}\pi^p}}$ is increasing for some constant $c>0$. It follows that
$$\sigma_T(B(a,r))\geq m_A{{\pi^p}\over{p!}}{{r^{2p}}\over{(1+rc)^{2p}}}=m_A{{\pi^p}\over{p!}}r^{2p}\left(1-2pcr+o(r)\right).$$
Given $\varepsilon>0$, there exists $r_0>0$ such that for every $0<r<r_0$ we have
$$\sigma_T(B(a,r))\geq m_A{{\pi^p}\over{p!}}r^{2p}\left(1-(2cp+\varepsilon)r\right)=\sigma_{[A]}(B(a,r))m_A\left(1-(2cp+\varepsilon)r\right).$$
It follows that $\int {{\1}_{B(a,r)}}\Theta_{r,\varepsilon}\wedge\beta_1^p\geq 0$, where $\Theta_{r,\varepsilon}=T-m_A\left(1-(2cp+\varepsilon)r\right)[A]$. Now, let $f$ be a positive continuous function with compact support in $M$ and let $$g_\delta(z)=\ds\int_A {\1}_{B(a,\delta)}(z)f(a)d\lambda(a)=\ds\int_{a\in A\cap{B(z,\delta)}}f(a)d\lambda(a).$$
Then, we have $\int g_\delta\Theta_{r,\varepsilon}\wedge\beta_1^p\geq 0$. Moreover, it is not hard to see that the function $g_\delta$ is continuous, and that when $\delta$ tends to $0$, the function ${{p!}\over{\pi^p\delta^{2p}}}g_\delta$ converges to $f$ on $A$ and to $0$ on $M\smallsetminus A$. Thus we infer the inequality $\int {\1}_A f\Theta_{r,\varepsilon}\wedge\beta_1^p\geq 0$. Since $f$ is arbitrary, we see that the measure ${\1}_A\Theta_{r,\varepsilon}\wedge\beta_1^p$ is positive. On the other hand, thanks to a result of \cite{2}, in a neighborhood $U$ of $a$ we have
$$A\cap U=\left\{x\in U,\ f_{p+1}(x)=\cdots=f_n(x)=0,\ \ov\partial_Jf_j=0\
{\rm on}\ A\cap U\ {\rm and}\
\partial_Jf_{p+1}\wedge\cdots\wedge\partial_Jf_n\not=0\ {\rm on}\ U \right\}.$$
The current ${\1}_A\Theta_{r,\varepsilon}$ is of order zero, supported by $A$ and  $d$-closed. Therefore, in virtue of the classical support theorem of Federer, for every $p+1\leq j\leq n$, we obtain
\begin{equation}\label{eq3.1}
    {\1}_A\Theta_{r,\varepsilon}\wedge\partial f_j={\1}_A\Theta_{r,\varepsilon}\wedge\overline{\partial f_j}=0.
\end{equation}
Consider now the almost complex system $\left(z_1,\cdots,z_p,z_{p+1}=f_{p+1},\cdots,z_n=f_n\right)$ such that $(\partial z_j)_j$ is a local basis of $T_{1,0}^\star M_{|U}$. For $k=1,\cdots,p$, let $\alpha_k=\sum_j\alpha_{kj}\partial z_j\in{\cal D}_{1,0}(U)$. Then a simple computation gives
$$i\alpha_1\wedge\overline\alpha_1\wedge\cdots\wedge i\alpha_p\wedge\overline\alpha_p=|{\rm det}(\alpha_{kj})|_{1\leq j,k\leq p}^2 i\partial z_1\wedge\overline{\partial z_1}\wedge\cdots\wedge i\partial z_p\wedge\overline{\partial z_p}+\gamma,$$
where $\gamma$ is a form which contains at least one $\partial f_j$ or $\overline{\partial f_j}$ for some $j$. In view of (\ref{eq3.1}) it follows that
$$
\begin{array}{lcl}
{\1}_A\Theta_{r,\varepsilon}\wedge i\alpha_1\wedge\overline\alpha_1\wedge\cdots\wedge i\alpha_p\wedge\overline\alpha_p&=&|{\rm det}(\alpha_{kj})|^2{{\1}_A}\Theta_{r,\varepsilon}\wedge i\partial z_j\wedge\overline{\partial z_j}\wedge\cdots\wedge i\partial z_p\wedge\overline{\partial z_p}\\&=&|{\rm det}(\alpha_{kj})|^2{\1}_A\Theta_r\wedge\beta_1^p\geq 0.
\end{array}
$$
This means that the current ${{\1}_A}\Theta_{r,\varepsilon}$ is positive, and the proof of the claim is complete. In order to finish the proof of the theorem, we remark that ${\1}_A T\geq m_A [A]$, by letting $r\l 0$. On the other hand, since $T$ is positive and closed, we see by the Federer support theorem that ${\1}_A T=c[A]$, for some $c\geq 0$. Since $T\geq {\1}_A T=c[A]$, we have $\nu_T(x)\geq c,\ \forall x\in A$ and therefore $m_A\geq c$. The opposite inequality is clear because ${\1}_A T\geq m_A [A]$.
\end{proof}
Before terminating this section, we state two interesting related problems pertaining to the notion of $J$-analytic subsets :\vskip0.1cm
{\bf P1.} {\it What can be said about loci of singularities of a $J$-analytic subset?}\vskip0.1cm
{\bf P2.} {\it Let $T$ be a closed positive current of bidimension $(p,p)$ on an almost complex manifold

\hskip0.8cm $(M,J)$. Fix $c>0$. Is the set $E_c=\{x\in M,\ \nu_T(x)\geq c\}$ a $J$-analytic subset of $M$?}\vskip0.1cm
It is at least well known that $E_c$ has a locally finite $2p$-dimensional
Hausdorff measure.
\section{Independence of  Lelong numbers from the coordinates}
Our primary goal in this section is to give a sufficient condition guaranteeing that the Lelong number of a negative plurisubharmonic current $T$ defined on an almost complex manifold exists. Following \cite{3}, the same result holds when $T$ is positive and psh without requiring any further condition. The second aim is to prove that the Lelong numbers of a positive (or negative) psh current are independent on the coordinate systems. In this way, we give a positive answer to a question stated in \cite{3}. As these problems are local, assume that $0\in M$, where $(M,J)$ is an almost complex manifold and let $(z_1,\cdots,z_n)$ be a coordinate system at $0$ such that $\ov\partial z_j=O(|z|)$. We keep here the notations used in \cite{2} :
\begin{eqnarray*}
\beta_1={i\over 2}\partial_J \overline\partial_J |z|^2,\qquad&&\beta={i\over 2}d\overline\partial|z|^2=\beta_1-{i\over
2}\theta_J \overline\partial_J |z|^2 + {i\over 2}\overline\partial_J^2|z|^2,\\
\sigma_T(B(r))=\int_{B(r)}T\wedge\beta_1^p,\,\quad&&\nu_T(r)={\sigma_T(r)\over\tau_pr^{2p}},
\end{eqnarray*}
where $T$ is a positive current of bidimension $(p,p)$ on $(M,J)$, $\tau_p$ is the volume of the unit ball of $\bC^p$ and $B(r)=\{z,\ |z|<r\}$. According to these notations, we state :
\begin{prop} Let $T$ be a negative plurisubharmonic current of bidimension $(p,p)$ on $(M,J)$. Assume that $t\mapsto {{\nu_{i\partial_J \ov\partial_J T}(t)}\over t}$ is locally integrable in a neighborhood of zero. Then, the Lelong number of $T$ at zero exists and is equals to the limit of $\nu_T(r)$, when $r$ tends to zero.
\end{prop}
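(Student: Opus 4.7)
Following the approach of \cite{3}, the plan is to derive an almost complex Lelong--Jensen type identity expressing the variation $\nu_T(r_2)-\nu_T(r_1)$ as an integral involving the positive current $S:=i\partial_J\ov\partial_J T$, modulo error terms produced by the torsion of $J$. The local integrability hypothesis on $t\mapsto\nu_S(t)/t$ will then force the right-hand side to be Cauchy as $r_1,r_2\to 0$, giving the existence of $\lim_{r\to 0}\nu_T(r)$.

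\textbf{Step 1: Lelong--Jensen identity via Stokes.} Starting from $\sigma_T(B(r))=\int_{B(r)}T\wedge\beta_1^p$ and writing $\beta_1^p=c_p(\partial_J\ov\partial_J|z|^2)^p/i^p$, I would integrate by parts on the annulus $\{r_1<|z|<r_2\}$ using the primitive $|z|^{2-2p}/(2-2p)$ of $d|z|^2$, exactly as in the proof of the classical Lelong--Jensen formula. The computation, already carried out in the almost complex setting in \cite{3}, produces an identity of the shape
\begin{equation*}
\nu_T(r_2)-\nu_T(r_1)=\frac{1}{\tau_p}\int_{r_1}^{r_2}\frac{\sigma_S(B(t))}{t^{2p+1}}\,dt+E(r_1,r_2),
\end{equation*}
where $E(r_1,r_2)$ collects the extra contributions that appear because $d\beta_1\neq 0$ in the non-integrable case.

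\textbf{Step 2: Controlling the error $E(r_1,r_2)$.} The coordinates are chosen so that $\ov\partial z_j=O(|z|)$, whence $\beta-\beta_1=O(|z|)$ and the torsion one-forms $\theta_J\ov\partial_J|z|^2$ and $\ov\partial_J^{\,2}|z|^2$ are also $O(|z|)$. Each integration by parts therefore produces a correction that carries an extra factor $|z|$ against $T$ or $S$, and after normalization by $r^{2p}$ these corrections are dominated by expressions of the form $C\int_{r_1}^{r_2}\nu_S(t)\,dt$ together with terms of the form $C\int_{r_1}^{r_2}\nu_T(t)\,t\,dt$. The first is absorbed into the main integral (since $t\cdot(\nu_S(t)/t)=\nu_S(t)$ is even smaller than $\nu_S(t)/t$ near zero up to an integrable factor), while the second is harmless because $\nu_T$ stays bounded on a small neighbourhood of $0$ (itself a consequence of the almost-monotonicity proved in \cite{3}, applied to the positive current $-T$ after absorbing the psh defect into $S$).

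\textbf{Step 3: Cauchy criterion and conclusion.} Since $\nu_S(t)=\sigma_S(B(t))/(\tau_p t^{2p})$, the main term in Step~1 has the form $C\int_{r_1}^{r_2}\nu_S(t)/t\,dt$. By hypothesis $t\mapsto\nu_S(t)/t$ is locally integrable near $0$, so both this main term and the error $E(r_1,r_2)$ tend to $0$ as $r_1,r_2\to 0$. Hence $\nu_T(r)$ satisfies the Cauchy criterion at $0$, the limit $\nu_T(0):=\lim_{r\to 0}\nu_T(r)$ exists and is finite, and by the very definition of the Lelong number of a psh current this is the Lelong number of $T$ at $0$.

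\textbf{Main obstacle.} The delicate point is Step~2: in the Stokes computation every derivative of $\beta_1^{p-1}$ picks up a torsion $1$-form, and these cross-terms are not, a~priori, measured by $S$ alone but involve $T$ against factors of order $|z|$. Making the bookkeeping precise, and showing that the resulting error really is dominated by the integrable quantity $\nu_S(t)/t$ plus a term that vanishes as $r\to 0$, is where the non-integrability of $J$ has to be handled carefully. Once this control is secured, the independence of $\nu_T(0)$ from the coordinate choice follows by the same trick: any two admissible coordinate systems give two functions $\nu_T^{(1)}$ and $\nu_T^{(2)}$ whose difference is again controlled by a torsion error of the same form, so both limits coincide.
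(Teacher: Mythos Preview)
Your Cauchy-criterion strategy differs from the paper's, and Step~2 contains a genuine gap that your ``Main obstacle'' paragraph does not resolve. The paper does not try to show $|\nu_T(r_2)-\nu_T(r_1)|\to 0$. Instead, after replacing $T$ by $-T$ (so that $T$ becomes positive with $i\partial_J\ov\partial_J T\leq 0$), it introduces the auxiliary quantity $\ov\nu_T(r)=\nu_T(r)+O(r)$ and the positive increasing function
\[
g(r)=\int_0^r\Bigl(\frac{t^{2p}}{r^{2p}}-1\Bigr)\frac{\nu_{i\partial_J\ov\partial_J T}(t)}{t}\,dt,
\]
and uses a one-sided inequality from \cite{3} (which is checked to persist in this plurisuperharmonic situation) to derive a differential inequality for $\ov\nu_T$. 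After multiplying by an integrating factor one obtains that
\[
r\longmapsto (1+\delta r)^{4p}\,\ov\nu_T(r)+(1+\delta r)^{4p-1}g(r)
\]
is increasing near $0$ for some constant $\delta>0$. Monotonicity gives the existence of the limit; the integrability hypothesis forces $g(r)\to 0$, and the proof is complete.

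The specific problem with your route is the boundedness of $\nu_T$ invoked in Step~2. You propose to obtain it from the almost-monotonicity of \cite{3} applied to the positive current $-T$, but \cite{3} treats positive \emph{plurisubharmonic} currents, whereas $i\partial_J\ov\partial_J(-T)=-S\leq 0$, so $-T$ is plurisuperharmonic, not psh. The phrase ``absorbing the psh defect into $S$'' does not name an actual construction that restores the hypothesis of \cite{3}. Without an a~priori bound on $\nu_T$, your torsion error $C\int_{r_1}^{r_2}\nu_T(t)\,t\,dt$ cannot be controlled, and the Cauchy argument does not close. The paper's monotonicity argument is precisely what sidesteps this circularity: an increasing function automatically has a one-sided limit as $r\to 0$, and positivity of (the sign-flipped) $T$ forces that limit to be finite.
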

 In the case when $J$ is integrable, this reduces to a result of \cite{4}. Moreover, it should be mentioned that in the same paper, it was shown that the above integrability condition is a sufficient condition that is not necessary.
\begin{proof} Without loss of generality, we may assume that $T$ is positive and $i\partial_J \ov\partial_J T$ is negative. Let
$$\ov\sigma_T(r)=\int_{B(r)}\left(T\w\beta^p- {i\over
2}T\w\ov\partial_J |z|^2\w\partial_J\beta_{p-1}+ {i\over
2}\ov\theta_J T\w\partial_J |z|^2\w\beta^{p-1}\right)\quad{\rm and}\quad \ov\nu_T(r)={\ov\sigma_T(r)\over r^{2p}}.$$
Taking into account the estimations of the forms $\beta^p,\ \partial_J |z|^2\w\beta^{p-1}$ and $\partial_J |z|^2\w\beta^{p-1}$ already given by \cite{2}, it is not hard to see that $\overline\nu_T(r)-\nu_T(r)=O(r)$. Let us consider the map :
$$
\begin{array}{lcl}
f(r)&=&\ov\nu_T(r)+ {1\over
r^{2p}}\ds\int_{0}^{r}tdt\int_{B(t)}i\partial_J \ov\partial_J
T\w\beta^{p-1}-\ds\int_{0}^{r}{{dt}\over
t^{2p-1}}\int_{B(t)}i\partial_J \ov\partial_J
T\w\beta^{p-1}
\\&=&\ov\nu_T(r)+\ds\int_0^r\left({{t^{2p}}\over
{r^{2p}}}-1\right){{\nu_{i\partial_J \ov\partial_J
T}(t)}\over t}dt=\ov\nu_T(r)+g(r) .
\end{array}
$$
Observe that $g$ is positive and increasing. Then, there exists a constant $\delta>0$ such that for every $1\gg r_2>r_1>0$, we have
$$
\begin{array}{lcl}
    f(r_2)-f(r_1)&=&\ov\nu_T(r_2)-\ov\nu_T(r_1)+ g(r_2)-g(r_1)\\
    &\geq&\ds\ov\nu_T(r_2)-\ov\nu_T(r_1)\\&\geq& {-\delta\over {r_1^{2p-1}}}\left(\ov\sigma_T(r_2)-\ov\sigma_T(r_1)\right)-
\delta\left({1\over {r_1}^{2p}}-{1\over {r_2}^{2p}}\right)r_2\ov\sigma_T(r_2).
\end{array}
$$
It should be mentioned that the last inequality was proved by \cite{3} in the case when $T$ is positive and psh and a simple computation shows that the same inequality remains valid in our setting. Thus, for $r_1=r,$ $r_2=r+h$ and letting $h\l 0$, we get
$$f'(r)=\ov\nu_T'(r)+g'(r)\geq {-\delta\over
{r^{2p-1}}}\left(r^{2p}\ov\nu_T'(r)+2pr^{2p-1}\ov\nu_T(r)\right)-2p\delta
\ov\nu_T(r)=-\delta r\ov\nu_T'(r)-4p\delta\ov\nu_T(r),$$
in other word we have $(1+\delta r)\ov\nu_T'(r)+4p\delta\ov\nu_T(r) + g'(r)\geq 0$. However, by multiplying this inequality with the positive term $(1+\delta r)^{4p-1}$, we obtain $$\left[(1+\delta r)^{4p}\ov\nu_T(r)\right]'+(1+\delta r)^{4p-1}g'(r)\geq 0.$$ Hence, for $r\ll 1$, it follows that
$$\left[(1+\delta r)^{4p}\ov\nu_T(r) + (1+\delta r)^{4p-1}g(r)\right]'\geq(4p-1)(1+\delta r)^{4p-2}\delta g(r)\geq 0.$$
In particular, the quantity $(1+\delta r)^{4p}\ov\nu_T(r)+(1+\delta r)^{4p-1}g(r)$ admits a limit when $r\l 0$. On the other hand the integrability condition in the theorem implies $\ds\lim_{r\l 0}g(r)=0$, which completes the proof.
\end{proof}
 We are now going to prove the main result in this section. Namely, the Lelong numbers of a positive (or negative) psh current are independent of coordinate systems. More precisely, we prove:
\begin{theo} Let $T$ be a psh current of bidimension $(p,p)$ on $(M,J)$. Suppose that either $T$ is positive, or $T$ is negative and the map $t\mapsto {{\nu_{i\partial_J \ov\partial_J T}(t)}\over t}$ is locally integrable in a neighborhood of zero. Then the Lelong number of $T$ does not depend on the coordinate system.
\end{theo}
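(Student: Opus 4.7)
The plan is to compare the Lelong numbers of $T$ at $0$ computed in two different almost complex coordinate systems $(z_1,\ldots,z_n)$ and $(w_1,\ldots,w_n)$, both centered at $0$ and both satisfying $\ov\partial_J z_j=O(|z|)$ and $\ov\partial_J w_j=O(|w|)$. Denote by $\sigma_T^z,\nu_T^z$ and $\sigma_T^w,\nu_T^w$ the corresponding trace integrals and normalized ratios (as defined at the start of this section). By the preceding Proposition (for the negative case) and by the result of \cite{3} (for the positive case), each of $\nu_T^z(r)$ and $\nu_T^w(r)$ admits a limit as $r\to 0$; the goal is to show these limits coincide.

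First I would reduce to the case where $z$ and $w$ have the same $1$-jet at $0$. The map $dw|_0\circ(dz|_0)^{-1}$ is a $\bC$-linear automorphism of $T^{1,0}_0M$, so after composing $w$ with its inverse we may assume $w_j=z_j+O(|z|^2)$. The underlying Lelong number is unchanged by this linear transformation because both $\beta_1^p$ and the defining ball transform by a Jacobian that cancels out, exactly as in the integrable case. This reduction is the only place where one uses the classical invariance of Lelong numbers under $\bC$-linear changes of coordinates.

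Step two is to compare the geometric data. From $w_j=z_j+O(|z|^2)$ we obtain $|w|^2=|z|^2+O(|z|^3)$, so for some $c>0$ and all sufficiently small $r$,
$$B^w\bigl(r(1-cr)\bigr)\;\subset\;B^z(r)\;\subset\;B^w\bigl(r(1+cr)\bigr).$$
From $\partial_J w_j=\partial_J z_j+O(|z|)$ one derives $\beta_1^w=\beta_1^z+O(|z|)$ and, after expansion, $(\beta_1^w)^p-(\beta_1^z)^p=\eta$ where $\eta$ is a $(p,p)$-form whose coefficients are $O(|z|)$. Combining the two estimates yields
$$\sigma_T^w\bigl(r(1-cr)\bigr)-E(r)\;\leq\;\sigma_T^z(r)\;\leq\;\sigma_T^w\bigl(r(1+cr)\bigr)+E(r),$$
with $E(r)=|\int_{B^z(r(1+cr))}T\wedge\eta|$. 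The term $E(r)$ is controlled by $r\sigma_T^z(r)$ by expressing $\eta$ as a linear combination of strongly positive forms weighted by $O(|z|)$ coefficients and invoking positivity of $T\wedge\beta_1^{p-1}\wedge(\text{strongly positive})$; the growth estimate $\sigma_T^z(r)=O(r^{2p})$ itself follows from the monotonicity statements already used in the proof of the preceding Proposition.

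Step three is to pass to the limit. Dividing the chain of inequalities by $\tau_p r^{2p}$ and using $E(r)=O(r\cdot r^{2p})$, we get
$$\nu_T^w\bigl(r(1-cr)\bigr)+o(1)\;\leq\;\nu_T^z(r)\;\leq\;\nu_T^w\bigl(r(1+cr)\bigr)+o(1)$$
as $r\to 0$. Since $\lim_{r\to 0}\nu_T^w(r)$ and $\lim_{r\to 0}\nu_T^z(r)$ both exist (by Proposition 1, where in the negative case the integrability assumption on $\nu_{i\partial_J\ov\partial_J T}(t)/t$ is precisely what guarantees this for the $w$-system as well, after noting that this condition is itself coordinate-independent by the same comparison applied to $i\partial_J\ov\partial_J T$), the two limits must be equal. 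The main obstacle is the careful bookkeeping in step two: both the domain of integration and the Kähler-like form get perturbed, and one must ensure the combined error $E(r)$ is genuinely $o(r^{2p})$ rather than merely $O(r^{2p})$, which forces the gain of one power of $r$ from the $O(|z|)$ coefficients of $\eta$ and from the thickness $\sim cr^2$ of the annular region $B^w(r(1+cr))\smallsetminus B^w(r(1-cr))$.
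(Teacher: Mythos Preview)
Your approach diverges from the paper's and contains a genuine gap in Step~1. You assert that the Lelong number is unchanged under a $\bC$-linear change $w\mapsto Aw$ ``because both $\beta_1^p$ and the defining ball transform by a Jacobian that cancels out, exactly as in the integrable case.'' For $p<n$ this is false as stated: if $A$ is not unitary, the set $\{|Az|<r\}$ is an ellipsoid rather than a ball, and $(\beta_1^{Az})^p$ is \emph{not} a scalar multiple of $(\beta_1^z)^p$; there is no single Jacobian factor to cancel. A crude sandwich only yields $c_1\,\nu_T^z(0)\le\nu_T^{Az}(0)\le c_2\,\nu_T^z(0)$ with $c_1,c_2$ depending on the singular values of $A$, which is not equality. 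Even in the integrable case, invariance of the Lelong number under a general $\bC$-linear (let alone biholomorphic) change is precisely the content of the Siu--Demailly comparison theorem, not an elementary Jacobian identity. Since the linear part is the essential obstacle in comparing two coordinate systems, Step~1 is circular, and Steps~2--3 --- which are fine in spirit and correctly handle the $O(|z|^2)$ remainder once the $1$-jets agree --- cannot stand without it.

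The paper avoids this by adapting Demailly's perturbation technique directly to the almost complex setting. With $\varphi=|z^1|^2$ and $\psi=|z^2|^2$, one introduces the auxiliary weight $\psi_{\varepsilon,l}=\psi^l+\varepsilon\varphi$ for $l>1$ and small $\varepsilon>0$. Because $\psi^l/\varphi\to 0$ near the origin, the sublevel set $\{\psi_{\varepsilon,l}<r^2\}$ is asymptotically the $\varphi$-ball $\{\varepsilon\varphi<r^2\}$, while trivially $\{\psi_{\varepsilon,l}<r^2\}\subset\{\psi<r^2\}$. A direct computation of $(id\ov\partial\psi_{\varepsilon,l})^p$ (equation~(\ref{eq4.1})), combined with the almost-monotonicity of $\ov\nu_T$ from \cite{3} (and, in the negative case, the auxiliary function $g$ from the proof of Proposition~1), then gives $\nu_T(\psi)\ge\nu_T(\varphi)$ after letting $l\to 1$, choosing $r=r_\varepsilon=\varepsilon^{3/2}$, and sending $\varepsilon\to 0$. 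No preliminary normalization of the $1$-jets of the coordinate change is needed; the perturbation absorbs the linear and higher-order parts simultaneously. Exchanging $\varphi$ and $\psi$ gives the reverse inequality.
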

\begin{proof} Since the problem is local, let $z^1=(z_1^1,\cdots,z_n^1)$ and $z^2=(z_1^2,\cdots,z_n^2)$ be two systems of coordinates at $0$. Denote by $\varphi=|z^1|^2,$ $\psi=|z^2|^2$ and for every $\varepsilon\ll 1$, $l>1$, we put $\psi_{\varepsilon,l}=\psi^l+\varepsilon\varphi$. Then, since $\ds\lim_{\varphi\l 0}{{\log\psi}\over{\log\varphi}}=1$, it is clear that $\ds\lim_{\varphi\l 0}{\psi^l\over\varphi}=0$, this means that $\psi_{\varepsilon,l}\approx\varepsilon\varphi,$ when $\varphi\l 0$. A direct computation gives (see \cite{3})
$$id\ov\partial\psi_{\varepsilon,1}=id\ov\partial(\psi +\varepsilon\varphi)=id\ov\partial\psi+id\ov\partial(\varepsilon\varphi)=\varepsilon\beta_1^1+\beta_1^2+ O(\varepsilon |z^1|)+O(|z^2|),$$
where $\beta_1^1={i\over 2}\sum_{j=1}^n\partial_J  z_{j}^1\w
\overline{\partial_J  z^1_j}$ and $\beta_1^2={i\over 2}\sum_{j=1}^n\partial_J  z_{j}^2\w
\overline{\partial_J  z^2_j}$. On the other hand
$$id\ov\partial\psi^l=i\partial\ov\partial\psi^l+i\ov\partial^2\psi^l-i\theta\ov\partial\psi^l-i\ov\theta\ov\partial(\psi^l) .$$ Using the fact that $\ov\partial\psi=O(|z^2|^2)$ and $i\partial\ov\partial\psi=\beta_1^2+O(|z^2|)$, we get
$$i\partial\ov\partial\psi^l=l(l-1)\psi^{l-2}i\partial\psi\wedge\ov\partial\psi+l\psi^{l-1}i\partial\ov\partial\psi=O\left(|z^2|^{2l-2}\right)\beta_1^2+O(|z^2|).$$
It is easy to see that the forms $\ov\partial^2\psi,\ \theta\ov\partial\psi$ and $\ov\theta\ov\partial\psi$ have $O(|z^2|)$ as coefficients. Hence we deduce
$$
i\ov\partial^2\psi^l=l\psi^{l-1}i\ov\partial^2\psi^l=O(|z^2|),\qquad
i\theta\ov\partial\psi^l=l\psi^{l-1}i\theta\ov\partial\psi=O(|z^2|),$$
$$i\ov\theta\ov\partial\psi^l=l\psi^{l-1}i\ov\theta\ov\partial\psi=O(|z^2|) .$$
It follows that
$$id\ov\partial\psi^l=O\left(|z^2|^{2l-2}\right)\beta_1^2+O(|z^2|) .$$
Finally, we obtain
\begin{equation}\label{eq4.1}
id\ov\partial\psi_{\varepsilon,l}=O\left(|z^2|^{2l-2}\right)\beta_1^2+\varepsilon\beta_1^1+O(|z^2|)+O(\varepsilon|z^1|) .
\end{equation}
\noindent
$\bullet$ Suppose that $T$ is positive and psh. By positiveness and by using (\ref{eq4.1}), we see that
$$
\ds{1\over{r^{2p}}}\ds\int_{\{\psi_{\varepsilon,l}<r^2\}}T\wedge (id\ov\partial\psi_{\varepsilon,l})^p\geq\ds{1\over{r^{2p}}}\ds\int_{\{\psi_\varepsilon<r^2\}}T\wedge (\varepsilon\beta_1^1)^p+O(r).$$
On the other hand for every $\delta>0$, there exists $\eta>0$ such that if $\varphi<\eta$ then $\psi_{\varepsilon,l}\leq\varepsilon(1 +\delta)\varphi$. Then, one has the inclusion $\{\varepsilon\varphi<r^2(1+\delta)^{-1}\}\subset\{\psi_{\varepsilon,l}<r^2\}$. Consequently
\begin{eqnarray*}
{1\over{r^{2p}}}\ds\int_{\{\psi_{\varepsilon,l}<r^2\}}T\wedge (\varepsilon\beta_1^1)^p&\geq& {1\over{r^{2p}}}\ds\int_{\{\varepsilon\varphi<r^2(1+\delta)^{-1}\}}T\wedge (\varepsilon\beta_1^1)^p\\
&=&(1+\delta)^{-p}\left(r(\varepsilon(1+\delta))^{-1/2}\right)^{-2p}\ds\int_{\left\{|z^1|<r(\varepsilon(1+\delta))^{-1/2}\right\}}T\wedge (\beta_1^1)^p\\
&=&(1+\delta)^{-p}\nu_T\left(\varphi,r(\varepsilon(1+\delta))^{-1/2}\right),
\end{eqnarray*}
where $\nu_T(\varphi,t)={1\over{t^{2p}}}\int_{\{|z^1|<t\}}T\wedge (\beta_1^1)^p$. Observe that
$id\ov\partial\psi_{\varepsilon,l}\l id\ov\partial\psi_{\varepsilon,1}$, when $l\l 1$. Then by passing to the limit $l\l 1,\delta\l0$ we obtain
\begin{equation}\label{eq4.2}
\ds{1\over{r^{2p}}}\ds\int_{\{\psi_{\varepsilon,1}<r^2\}}T\wedge (id\ov\partial\psi_{\varepsilon,1})^p\geq\nu_T\left(\varphi,r/\sqrt{\varepsilon}\right)+O(r).
\end{equation}
On the other hand, it was shown in [3] that the quantity $\overline\nu_T(\varphi,t)$ converges to $\nu_T(\varphi)$ when $t\l 0$. More precisely the map $t\mapsto(1+ct)^{4p}\overline\nu_T(\varphi,t)$ is increases near $0$ for some constant $c>0$. Moreover, it was noticed in the begining of the proof of prop.1 that $\overline\nu_T(\varphi,t)$ differs from $\nu_T(\varphi,t)$ by a $O(t)$ term for $t$ sufficiently small. Therefore from (\ref{eq4.2}) we get the following
\begin{equation}\label{eq4.3}
\ds{1\over{r^{2p}}}\ds\int_{\{\psi_{\varepsilon,1}<r^2\}}T\wedge (id\ov\partial\psi_{\varepsilon,1})^p\geq\nu_T(\varphi)+O\left(r/\sqrt{\varepsilon}\right)+O(r).
\end{equation}
Let $r_\varepsilon=\varepsilon^{3/2}$. Thank's to the expression of $id\ov\partial\psi_{\varepsilon,1}$ and observe that $\{\psi_{\varepsilon,1}<r^2\}\subset \{\psi<r^2\}$, we derive that the integral in (\ref{eq4.2}) is bounded above by $\nu_T(\psi,r_\varepsilon)+O(\varepsilon)$. It follows that $\nu_T(\psi,r_\varepsilon)+O(\varepsilon)\geq\nu_T(\varphi)+O(\varepsilon)$ and hence we let $\varepsilon\l 0$ to obtain the inequality $\nu_T(\psi)\geq\nu_T(\varphi)$. Finally, observe that the equality is obtained by reversing the roles of  $\psi$ and $\varphi$.\\
$\bullet$ Suppose that $T$ is negative psh and that $t\mapsto {{\nu_{i\partial_J \ov\partial_J T}(t)}\over t}$ is locally integrable in a neighborhood of zero. Without loss of generality, we may assume that $-T$ is negative psh. By repeating the same argument of the previous case of positive psh currents, we see that the inequality (\ref{eq4.2}) remains valid. Moreover, by turning back to the proof of prop.1 ther exists a constant $c>0$ such that the map $t\mapsto(1+ct)^{4p}\overline\nu_T(\varphi,t)+(1+ct)^{4p-1}g(t)$ is increases near $0$. Hence, it is not hard to see that the inequality (\ref{eq4.3}) becomes
$$
\ds{1\over{r^{2p}}}\ds\int_{\{\psi_{\varepsilon,1}<r^2\}}T\wedge (id\ov\partial\psi_{\varepsilon,1})^p\geq\nu_T(\varphi)+O\left(r/\sqrt{\varepsilon}\right)-\frac{g(r/\sqrt{\varepsilon})}{\left(1+c(r/\sqrt{\varepsilon})\right)}+O(r).
$$
Thus we complete the proof by following the same argument after inequality (\ref{eq4.3}) and by using the fact that $\ds\lim_{r\l 0}g(r)=0$.
\end{proof}
\section*{Acknowledgments} The author would like to thank Professor Jean-Pierre Demailly for many fruitful discussions concerning this article.


\begin{thebibliography}
{X-XX1}
    \bibitem{1} \textbf{Andersson M.}, A generalized Poincar\'e-Lelong formula, {\sl Math.\ Scand.}, 101 (2007), 195-218.
    \bibitem{2} \textbf{Elkhadhra F.}, $J$-pluripolar subsets and currents on almost complex manifolds, {\sl Math.\ Zeit}, 264 (2010), 399-422.
    \bibitem{3} \textbf{Elkhadhra F et Mimouni S.}, Sur l'existence du nombre de Lelong d'un courant positif psh d\'efini sur une vari\'et\'e presque complexe, {\sl C.R.A.S}, Paris, T 344, s\'erie I (2007), 753-758.
    \bibitem{4} \textbf{Ghiloufi N.}, On the Lelong-Demailly numbers of plurisubharmonic currents, {\sl C.R.A.S}, Paris, s\'erie I, (2011), 505-510.
    \bibitem{5} \textbf{Ivashkovich S. and Rosay J.-P.}, Boundary values and boundary uniqueness of $J$-holomorphic mapping, Int. Math. Res. Not. IMRN,  17, (2011), 3839-3857.
    \bibitem{6} \textbf{M\'eo M.}, Courant r\'esidus et formule de King, {\sl Ark.\ Mat.}, 44 (2006), 149-165.
    \bibitem{7} \textbf{Pali N.}, Structures diff\'erentielles en g\'eom\'etrie complexe et presque complexe. {\sl Th\`ese de l'Universit\'e Joseph Fourier (Grenoble I),} http://www-fourier.ujf-grenoble.fr/-Theses-et-habilitations-soutenues-.html (2004).
\bibitem{8} \textbf{Siu Y.-T.}, Analyticity of sets associated to Lelong numbers and the extension of closed positive currents, {\sl Invent. Math}, 27, (1974), 53-156.

\end{thebibliography}
\end{document}